\newcommand{\Rr}{\mathbb{R}}
\DeclareMathAlphabet{\mathpzc}{OT1}{pzc}{m}{it}
\newcounter{main}
\newtheorem{theorem}{Theorem}[section]
\newtheorem{proposition}[theorem]{Proposition}
\newtheorem{corollary}[theorem]{Corollary}
\newtheorem{maintheorem}{Theorem}
\newcommand{\blanksquare}{\,\,\,$\sqcup\!\!\!\!\sqcap$}
\newcounter{example}
{{\stepcounter{example}}{\flushleft {\bf Example \arabic{example}:}}}%
{\par}
\title[$C^1$-stably shadowable conservative diffeomorphisms are Anosov]{$C^1$-stably shadowable conservative\\ diffeomorphisms  are Anosov}
\author[M. Bessa]{M\'{a}rio Bessa}
\address{Universidade da Beira Interior, Rua Marqu\^es d'\'Avila e Bolama,
  6201-001 Covilh\~a
Portugal.}
\email{bessa@fc.up.pt}
\begin{document}

\begin{abstract}
In this short note we prove that if a symplectomorphism $f$ is $C^1$-stably sha\-dow\-a\-ble, then $f$ is Anosov.  The same result is obtained for volume-preserving diffeomorphisms.
\end{abstract}

\maketitle

\section{Introduction: basic definitions and statement of the results}

\begin{subsection}{The symplectomorphisms framework}

Denote by $M$ a $2d$-dimensional manifold with Riemaniann structure and endowed with a closed and nondegenerate 2-form $\omega$ called \emph{symplectic form}. Let $\mu$ stands for the volume measure associated to the volume form wedging $\omega$ $d$-times, i.e., $\omega^d=\omega\land\dots\land\omega$. The Riemannian structure induces a norm $\|\cdot\|$ on the tangent bundle $T_x M$. Denote the Riemannian distance by $d(\cdot,\cdot)$. We will use the canonical norm of a bounded linear map $A$ given by $\|A\|=\sup_{\|v\|=1}\|A\cdot v\|$. By the theorem of Darboux (see e.g.~\cite[Theorem 1.18]{MZ}) there exists an atlas $\{\varphi_j\colon U_j\to\Rr^{2d}\}$, where $U_j$ is an open subset of $M$, satisfying $\varphi_j^*\omega_0=\omega$ with $\omega_0=\sum_{i=1}^d dy_i\land dy_{d+i}$. A diffeomorphism $f\colon(M,\omega)\to(M,\omega)$ is called a \emph{symplectomorphism} if it leaves invariant the symplectic structure, say  $f^*\omega=\omega$. Observe that, since $f^*\omega^d=\omega^d$, a symplectomorphism $f\colon M\to M$ preserves the volume measure $\mu$.
Symplectomorphisms arise naturally in the rational mechanics formalism as the first return Poincar\'e maps of hamiltonian flows. For this reason, it has long been one of the most interesting research fields in mathematical physics. We suggest the reference \cite{MZ} for more  details on general hamiltonian and symplectic theories. Let $(\text{Symp}_{\omega}^1(M), C^1)$ denote the set of all symplectomorphisms of class $C^1$  defined on $M$, endowed with the usual $C^1$ Whitney topology.

\end{subsection}

\begin{subsection}{The shadowing property}

The concept of shadowing in dynamical systems is inspired by the numerical computational idea of estimating differences between exact and approximate solutions along orbits and to understand the influence of the errors that we commit and allow on each iterate.  We may ask if it is possible to obtain shadowing of approximate trajectories in a given dynamical system by exact ones. We refer Pilyugin's book ~\cite{P} for a completed description on the subject.

There are, of course, considerable limitations to the amount of information we can extract from a given specific system that exhibits the shadowing property, since a $C^1$-close system may be absent of that property. For this reason it is of great utility and natural to consider that a selected model can be slightly perturbed in order to obtain the same property - the stably shadowable dynamical systems. 

For $\delta>0$ and $a,b\in [-\infty,+\infty]$ such that $a<b$, the sequence of points $\{x_i\}_{i=a}^b$ in $M$ is called a \emph{$\delta$-pseudo orbit} for $f$ if $d(f(x_i),x_{i+1})<\delta$ for all $a\leq i \leq b-1$.

Let $\Lambda\subseteq M$ be a closed and $f$-invariant set. The symplectomorphism $f\colon M\rightarrow M$ is said to have \emph{the shadowing property} in $\Lambda$ if for all $\epsilon>0$, there exists $\delta>0$, such that for any $\delta$-pseudo orbit $(x_n)_{n\in\mathbb{Z}}$ in $\Lambda$, there is a point $x\in M$ which $\epsilon$-shadows $(x_n)_{n\in\mathbb{Z}}$, i.e. $d(f^i(x),x_i)<\epsilon$.

Let  $f \in \text{Symp}^{1}_\omega(M)$ we  say that $f$  is $C^1$-\emph{stably (or robustly) shadowable} if there exists a neighborhood $\mathcal{V}$ of $f$ in $\text{Symp}^{1}_\omega(M)$ such that any $g\in\mathcal{V}$ has the shadowing property. 

We point out that $f$ has the shadowing property if and only if $f^n$ has the shadowing property for every $n\in\mathbb{Z}$.

\end{subsection}

\begin{subsection}{Hyperbolicity and statement of the results}

We say that any element $f$ in the set $\text{Symp}_{\omega}^1(M)$ is \emph{Anosov} if, there exists $\lambda\in(0,1)$ such that the tangent vector bundle over $M$ splits into two $Df$-invariant subbundles $TM=E^u\oplus E^s$, with $\|Df^n|_{E^s}\|\leq \lambda^n$ and $\|Df^{-n}|_{E^u}\|\leq \lambda^n$. We observe that there are plenty Anosov diffeomorphisms which are not symplectic.

Let us recall that a periodic point $p$ of period $\pi$ is said to be \emph{hyperbolic} if the tangent map $Df^\pi(p)$ has no norm one eigenvalues.

Let  $f \in \text{Symp}^{1}_\omega(M)$ we  say that $f$ is in $\mathcal{F}^1_{\omega}(M)$ if there exists a neighborhood $\mathcal{V}$ of $f$ in $\text{Symp}^{1}_\omega(M)$ such that any $g\in\mathcal{V}$, has all the periodic orbits of hyperbolic type. We define analogously the set $\mathcal{F}^1_\mu(M)$ in the broader set of volume-preserving diffeomorphisms $\text{Symp}^{1}_\mu(M)$.

The results in this note can be seen as a generalization of the result in \cite{S} for symplectomorphisms and volume-preserving diffeomorphisms. Let us state our first result.

\begin{maintheorem}\label{teo1}
If a symplectomorphism $f$ defined in symplectic manifold $M$ is $C^1$-stably shadowable, then $f$ is Anosov.  
\end{maintheorem}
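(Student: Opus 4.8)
The plan is to reproduce, in the conservative category, the ``star'' dichotomy that underlies Sakai's description of the $C^1$-interior of the shadowing property. I would split the argument into two steps: (1)~a $C^1$-stably shadowable symplectomorphism lies in $\mathcal{F}^1_\omega(M)$; and (2)~$\mathcal{F}^1_\omega(M)$ coincides with the set of Anosov symplectomorphisms. Step~(2) is the symplectic counterpart of Ma\~n\'e's characterisation of the star property via Axiom~A, and I would simply quote it, remarking that in the conservative setting Poincar\'e recurrence forces the nonwandering set $\Omega(f)$ to be all of $M$, so that ``Axiom~A'' means exactly ``Anosov''. Thus everything reduces to step~(1).

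For step~(1) I would argue by contradiction: assume $f$ is $C^1$-stably shadowable with witnessing neighbourhood $\mathcal{V}\subseteq\text{Symp}^1_\omega(M)$, yet $f\notin\mathcal{F}^1_\omega(M)$. Then $\mathcal{V}$ already contains some $g$ having a non-hyperbolic periodic point $p$ of period $\pi$, so $Dg^{\pi}(p)$ has an eigenvalue on the unit circle. Using a conservative (symplectic) Franks' lemma I would perturb $g$ inside $\mathcal{V}$ so that the neutral part of the derivative along the orbit of $p$ becomes a finite-order symplectic map: first push the unit-circle eigenvalues to roots of unity, and where Jordan blocks occur first open them into small elliptic rotations and only then make the angles rational. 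Passing to a suitable iterate, which is legitimate because $h$ has the shadowing property precisely when $h^{n}$ does, I obtain for some $N$ a perturbation $g_1\in\mathcal{V}$ for which $p$ is a fixed point of $g_1^{N}$ with $Dg_1^{N}(p)$ equal to the identity on its central symplectic subspace $E^c$ and hyperbolic on the complementary symplectic subspace. A final purely local perturbation — cutting off a Hamiltonian (or a generating function) near $p$, which keeps us in $\text{Symp}^1_\omega(M)$ and $C^1$-close — produces $\tilde g\in\mathcal{V}$ whose iterate $F:=\tilde g^{N}$ restricts to the identity on a small closed disc $D^c\subseteq W^c_{\mathrm{loc}}(p)$ consisting of fixed points, which is then a normally hyperbolic invariant submanifold, and one may arrange that $F$ has no fixed points on $W^c_{\mathrm{loc}}(p)\setminus D^c$.

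It then remains to contradict $\tilde g\in\mathcal{V}$ by showing that $F$, hence $\tilde g$, is not shadowable. Fix $\epsilon>0$ smaller than the diameter of $D^c$ and than the size of the normally hyperbolic local picture, and choose $q'\in D^c$ with $d(p,q')$ comparable to $\mathrm{diam}(D^c)$. Given any $\delta>0$, form the bi-infinite sequence that is constant $=p$ for very negative indices, then steps along $D^c$ from $p$ to $q'$ in jumps of length $<\delta$, and then is constant $=q'$; since every point of $D^c$ is fixed by $F$, this is a $\delta$-pseudo-orbit lying in $D^c$. If $z$ were to $\epsilon$-shadow it, the whole $F$-orbit of $z$ would remain $\epsilon$-close to the compact set $D^c$, so by normal hyperbolicity $z$ would lie on the local centre manifold, and since $F$ has no centre fixed points off $D^c$ this forces $z\in D^c$; but a fixed point $z$ cannot be $\epsilon$-close to both $p$ and $q'$. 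This contradiction completes step~(1).

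I expect the real difficulty to be concentrated in the second paragraph: producing a clean symplectic Franks' lemma and, above all, carrying out the normally hyperbolic surgery that turns a single neutral periodic point into a disc of fixed points while remaining inside $\text{Symp}^1_\omega(M)$ and $C^1$-close to $g$. Once that perturbation is available, the non-shadowing argument and the implication ``$\mathcal{F}^1_\omega(M)\Rightarrow$ Anosov'' are comparatively routine, and the volume-preserving case is handled by the same scheme with $\omega$ replaced by $\mu$.
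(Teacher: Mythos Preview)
Your proposal is correct and follows the same two-step scheme as the paper: first show that $C^1$-stable shadowing forces $f\in\mathcal{F}^1_\omega(M)$, then quote that $\mathcal{F}^1_\omega(M)$ consists of Anosov symplectomorphisms (the paper cites Newhouse for this rather than a symplectic Ma\~n\'e statement). The contradiction in step~(1) is also the same in spirit---manufacture a small center disc on which an iterate acts as the identity and then walk a pseudo-orbit across it---but the paper reaches that configuration more economically than your Franks-lemma-plus-Hamiltonian-surgery route: after smoothing via Zehnder and using the Horita--Tahzibi perturbation lemma to place a center eigenvalue at a root of unity, it applies the Arbieto--Matheus weak pasting lemma to make the map \emph{linear} in Darboux coordinates along the periodic orbit, so that the identity-on-a-center-plane comes for free from the linear algebra and the shadowing point is forced onto that plane by the linear hyperbolicity of the complementary directions, bypassing your separate surgery step and the appeal to normal hyperbolicity of $D^c$.
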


It is well known that Anosov diffeomorphisms impose severe topological restrictions to the manifold where they are supported. Thus, we present a simple but startling consequence of Theorem~\ref{teo1} that shows how topological conditions on the phase space imposes numerical restrictions to a given dynamical system.

\begin{corollary}
If $M$ does not support an Anosov diffeomorphisms, then there are no $C^1$-stably shadowable symplectomorphisms.
\end{corollary}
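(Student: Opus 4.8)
The plan is to derive this statement purely formally from Theorem~\ref{teo1}; it is essentially its contrapositive, so no new analytic input is needed. I would argue by contradiction. Suppose, for some manifold $M$ not supporting an Anosov diffeomorphism, that there nonetheless exists a $C^1$-stably shadowable symplectomorphism $f\in\text{Symp}^1_\omega(M)$. Then Theorem~\ref{teo1} applies to $f$ verbatim and produces $\lambda\in(0,1)$ together with a $Df$-invariant splitting $TM=E^u\oplus E^s$ satisfying $\|Df^n|_{E^s}\|\leq\lambda^n$ and $\|Df^{-n}|_{E^u}\|\leq\lambda^n$; that is, $f$ is Anosov.

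The one point worth making explicit is that an Anosov symplectomorphism is, in particular, an Anosov diffeomorphism in the usual sense: the condition $f^*\omega=\omega$ is an additional structure carried by $f$, not a weakening of the notion of hyperbolicity (as the remark following the definition of Anosov already emphasizes, the symplectic class sits strictly inside the Anosov class). Hence the existence of such an $f$ would exhibit an Anosov diffeomorphism on $M$, contradicting the standing hypothesis. Therefore no $C^1$-stably shadowable symplectomorphism can exist on $M$. I do not anticipate any real obstacle in this argument: all the substance is already packaged inside Theorem~\ref{teo1}, and what remains is a single logical deduction together with the trivial observation above. The striking content of the statement is inherited from the classical topological obstructions to admitting an Anosov diffeomorphism — for instance, $M$ cannot be a sphere — which I would simply invoke as a black box rather than reprove.
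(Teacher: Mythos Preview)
Your proposal is correct and matches the paper's treatment: the corollary is stated there as an immediate consequence of Theorem~\ref{teo1} with no separate proof, and your contrapositive argument (plus the trivial remark that an Anosov symplectomorphism is a fortiori an Anosov diffeomorphism) is exactly the intended one-line deduction.
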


We end this introduction by recalling a result in the vein of ours proved in ~\cite{BR} - \emph{$C^1$-robust topologically stable symplectomorphisms are  Anosov}. 
We point out that, in the broader setting of dissipative diffeomorphisms,  it was proved, in \cite[Theorem 4]{W}, that \emph{expansiveness} and the (robust) shadowing property implies (robust) topologic stability. In  \cite[Theorem 11]{W} it is also proved that topological stability, imply the shadowing property. Another result which relates $C^1$-robust properties with hyperbolicity is the Horita and Tahzibi theorem (see~\cite{HT}) which states that $C^1$-robustly transitive symplectomorphisms are partially hyperbolic.

\bigskip

\end{subsection}

\section{Proof of Theorem~\ref{teo1}}\label{diff}

Theorem\ref{teo1} is a direct consequence of the following two propositions. The following result can be found in ~\cite{Ne}.

\begin{proposition}\label{Newhouse} (Newhouse)
If $f\in\mathcal{F}^1_\omega$, then $f$ is Anosov.
\end{proposition}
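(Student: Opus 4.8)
The plan is to recognize Proposition~\ref{Newhouse} as a conservative analogue of Ma\~n\'e's celebrated characterization of structural stability: a diffeomorphism all of whose periodic orbits are robustly hyperbolic must itself be (uniformly) hyperbolic on the whole manifold. In the symplectic setting this is exactly the statement due to Newhouse that I would simply cite, so rather than reprove it I will sketch the standard route one takes to establish such a result, since that is what the author presumably does by referring to~\cite{Ne}.

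\medskip

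First I would exploit the hypothesis $f\in\mathcal{F}^1_\omega$ together with a $C^1$-closing/connecting argument. By Pugh's closing lemma in the conservative category (or the Pugh--Robinson version for symplectomorphisms), periodic orbits are dense in the (full) manifold for a $C^1$-generic symplectomorphism, and more importantly one can use closing-type perturbations to realize, near any recurrent orbit segment, a periodic orbit whose derivative cocycle is close to the one along the original segment. The robustness assumption then forbids this manufactured periodic orbit from failing to be hyperbolic after an arbitrarily small perturbation. This is the mechanism that converts the \emph{local} spectral information at periodic points into a \emph{uniform} statement.

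\medskip

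Second, I would invoke a dichotomy of Ma\~n\'e--Bochi--Viana type: along the derivative cocycle of $f$, either one has a dominated splitting over the relevant invariant set, or else arbitrarily small $C^1$-perturbations (through symplectomorphisms, using Franks' lemma in its conservative form) can drive the Lyapunov exponents toward zero and thereby create a non-hyperbolic periodic orbit, contradicting $f\in\mathcal{F}^1_\omega$. Hence one obtains a dominated splitting $TM=E\oplus F$ with $E$ having (robustly) negative exponents and $F$ (robustly) positive ones. Using that $f$ is a symplectomorphism, the two subbundles are $\omega$-Lagrangian complements of equal dimension, which rigidifies the splitting. The final step upgrades domination plus the sign of the exponents to genuine uniform hyperbolicity: a dominated splitting all of whose central behaviour has been shown to be contracting on $E$ and expanding on $F$ is an Anosov splitting, so $\|Df^n|_E\|\le\lambda^n$ and $\|Df^{-n}|_F\|\le\lambda^n$ for some $\lambda\in(0,1)$.

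\medskip

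The main obstacle, and the technical heart of Newhouse's argument, is the passage from "no dominated splitting" to "a non-hyperbolic periodic orbit can be created by a $C^1$-small symplectic perturbation." This requires a conservative Franks-type lemma allowing one to perturb the derivative cocycle along a periodic orbit inside $\mathrm{Sp}(2d,\R)$ while realizing the perturbed cocycle by an actual symplectomorphism $C^1$-close to $f$, and then a careful rotation/mixing argument (in the spirit of Bochi--Viana) to merge eigenvalues across a weak gap and produce an eigenvalue of modulus one. Since this is precisely the content of~\cite{Ne}, in the paper this proposition is taken as given, and the remaining work of Theorem~\ref{teo1} is to show that $C^1$-stable shadowing implies $f\in\mathcal{F}^1_\omega$, which is the second proposition to come.
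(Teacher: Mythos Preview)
The paper does not prove this proposition at all: the sentence preceding it reads ``The following result can be found in~\cite{Ne},'' and no argument is supplied. You correctly identify this, so in that sense your proposal matches the paper exactly---both defer to Newhouse.

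Your accompanying sketch is a reasonable \emph{modern} route to the statement, but it is worth noting that it is anachronistic relative to~\cite{Ne}. Newhouse's 1975 paper predates Ma\~n\'e's stability work and the Bochi--Viana dichotomy by decades; his argument proceeds more directly by showing that if $f$ is not Anosov one can, via closing plus a symplectic Franks-type perturbation, produce a \emph{quasi-elliptic} periodic point (an eigenvalue on the unit circle), exploiting the fact that in $\mathrm{Sp}(2d,\R)$ eigenvalues occur in pairs $\lambda,\lambda^{-1}$ and hence must meet on $S^1$ before crossing it. Your outline via ``no domination $\Rightarrow$ mix exponents to zero'' reaches the same contradiction and is arguably cleaner with today's tools, but it is not what the cited reference actually does. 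Also, two small cautions in your sketch: density of periodic orbits from the closing lemma is generic rather than for the given $f$, so one should pass to a nearby $g\in\mathcal{F}^1_\omega$ first; and the claim that the two subbundles of a dominated splitting are automatically Lagrangian needs the additional input that their dimensions coincide (which the robust-hyperbolicity hypothesis does force, again via the $\lambda\leftrightarrow\lambda^{-1}$ symmetry).
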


The following result is a symplectic version of  ~\cite[Proposition 1]{M}. Actually, Moriyasu, while working in the dissipative context, con\-si\-de\-red the shadowing property in the non-wandering set, which, in the symplectic setting, and due to Poincar\'e recurrence, is the whole manifold $M$.

\begin{proposition}\label{Moriyasu} 
If $f$ is a $C^1$-stably shadowable symplectomorphism, then $f\in\mathcal{F}^1_\omega$.
\end{proposition}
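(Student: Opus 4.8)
The plan is to prove the contrapositive: if $f \notin \mathcal{F}^1_\omega$, then $f$ is not $C^1$-stably shadowable. So suppose $f \notin \mathcal{F}^1_\omega$. By definition, every $C^1$-neighborhood $\mathcal{V}$ of $f$ contains some $g$ having a non-hyperbolic periodic orbit. The strategy is to use a $C^1$-perturbation (a symplectic version of Franks' lemma, available in the conservative setting — see e.g. the techniques used in \cite{HT} and \cite{BR}) to convert this non-hyperbolic periodic point into one whose derivative along the orbit has an eigenvalue that is a root of unity, hence a periodic point $p$ of some period $\pi$ for a symplectomorphism $g$ arbitrarily $C^1$-close to $f$ such that $Dg^\pi(p)$ has an eigenvalue equal to $1$ (after possibly passing to a further iterate). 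This produces, in the center direction, a small invariant segment on which $g^\pi$ (restricted suitably) behaves like the identity or like a small translation/rotation, which is the source of the failure of shadowing.

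Next I would localize the obstruction. Having the eigenvalue $1$, a further arbitrarily small $C^1$-perturbation — supported in a small neighborhood of the orbit of $p$ and done inside $\text{Symp}^1_\omega(M)$, using Darboux/Moser charts to make the perturbation symplectic — should create a small arc $\gamma$ through $p$, tangent to the center eigendirection, that is normally hyperbolic-free in the relevant direction and on which $g^\pi$ acts as a nontrivial translation along $\gamma$ pushing points steadily toward one endpoint $q_+$ and away from the other endpoint $q_-$, with $q_-$ and $q_+$ both fixed by $g^\pi$. One then argues that this configuration is incompatible with the shadowing property for $g^\pi$ (hence for $g$, since $g$ has shadowing iff $g^\pi$ does, as recalled in the excerpt): one builds a $\delta$-pseudo-orbit that starts near $q_-$, drifts along $\gamma$ under the translation for a long time, and then is ``kicked back'' near $q_-$ by a single $\delta$-jump, repeating forever; no genuine orbit can stay within a fixed small $\epsilon$ of this pseudo-orbit because a true orbit near $\gamma$ is monotonically dragged to $q_+$ and cannot return. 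This contradicts stable shadowability, since $g$ lies in an arbitrarily small neighborhood of $f$.

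The main obstacle I expect is making the perturbation step simultaneously (i) $C^1$-small, (ii) symplectic, and (iii) effective at producing a genuinely non-shadowable local picture rather than merely an eigenvalue $1$. Having eigenvalue $1$ alone does not immediately destroy shadowing — if $g^\pi$ restricted to the center arc is the identity, shadowing can survive. So the delicate point is to perturb the higher-order (nonlinear) behavior along the center curve to obtain the strict one-sided drift described above, while respecting the symplectic form; this is where one must invoke a conservative pasting/Franks-type lemma carefully and control the support of the perturbation so as not to disturb hyperbolicity in the stable/unstable directions. A secondary technical point is handling the case where the non-hyperbolic eigenvalue is complex of modulus one (an irrational rotation), which one first perturbs to a rational rotation and then to the eigenvalue-$1$ situation by passing to an iterate, and ensuring throughout that the constructed pseudo-orbit genuinely lies in $M$ and stays in the perturbed neighborhood. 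Once the local non-shadowing picture is in place, deriving the contradiction with $C^1$-stable shadowability is routine.
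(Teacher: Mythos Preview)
Your overall architecture --- pass to a nearby symplectomorphism with a non-hyperbolic periodic point, adjust the modulus-one eigenvalue to be a root of unity, and then build a pseudo-orbit along the center direction that no true orbit can follow --- is exactly the paper's. The divergence is in how the local obstruction is produced. You assert that ``if $g^\pi$ restricted to the center arc is the identity, shadowing can survive'', and therefore propose a further nonlinear perturbation creating a one-sided drift along an arc $\gamma$ with fixed endpoints. This is where you make the problem harder than it is. The paper instead \emph{linearizes}: using Zehnder's smoothing and then the weak pasting lemma of Arbieto--Matheus, it replaces $g$ by a symplectomorphism $f_1$ that, in Darboux coordinates, equals the linear map $Dg$ on a $\xi$-neighborhood of the orbit of $q$. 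After adjusting the eigenvalue to be a root of unity, $f_1^{m\pi}$ is then literally the identity on the two-dimensional center subspace $E^c_q$ throughout that $\xi$-neighborhood. A $\delta$-pseudo-orbit is now taken inside $E^c_q$, starting at $x_0=q$ and drifting out to some $x_s=y$ with $d(q,y)=3\xi/4$; an $\epsilon$-shadowing point $z$ would have all its $f_1^{m\pi}$-iterates inside the $\xi$-ball, which (by the linear hyperbolic splitting in the complementary directions) forces $z\in E^c_q$, hence $f_1^{m\pi}(z)=z$, and then $z$ would have to be $\epsilon$-close to both $q$ and $y$ --- impossible for $\epsilon<\xi/4$.

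So the ``main obstacle'' you flag is not there: the identity-on-center picture already kills shadowing, and the pasting lemma gives it to you for free. Your drift construction is not needed, and in fact introduces two extra difficulties you would have to handle (making the nonlinear one-sided drift symplectic on a two-dimensional center piece, and arguing --- without the clean linear splitting --- that any shadowing orbit must stay on or near $\gamma$). The paper's route sidesteps both by working in the linearized model.
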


\begin{proof}

The proof is by contradiction; let us assume that there exists a $C^1$-stably shadowable symplectomorphism $f$ having a  non-hyperbolic closed orbit $p$ of period $\pi$.

In order to go on with the argument we need to $C^1$-approximate the symplectomorphism $f$ by a new one, $f_1$, which, in the local coordinates given by Darboux's theorem, is \emph{linear} in a neighborhood of the periodic orbit $p$. To perform this task, in the sympletic setting, and taking into account ~\cite[Lemma 3.9]{AM}, it is required higher smoothness of the symplectomorphism. 

Thus, if $f$ is of class $C^\infty$, take $g=f$, otherwise we use \cite{Z} in order to obtain a $C^\infty$ $C^1$-stably shadowable symplectomorphism $h$, arbitrarily $C^1$-close to $f$, and such that $h$ has a periodic orbit $q$, close to $p$, with period $\pi$. We observe that $q$ may not be the analytic continuation of $p$ and this is precisely the case when $1$ is an eigenvalue of the tangent map $Df^{\pi}(p)$. 

If $q$ is not hyperbolic take $g=h$. If $q$ is hyperbolic for $Dh^{\pi}(q)$, then, since $h$ is $C^1$-arbitrarily close to $f$, the distance between the spectrum of $Dh^{\pi}(q)$ and the unitary circle can be taken arbitrarily close to zero. This means that we are in the presence of ``poor" hyperbolicity, thus in a position to apply ~\cite[Lemma 5.1]{HT} to obtain a new $C^1$-stably shadowable symplectomorphism $g\in\text{Symp}_{\omega}^\infty(M)$, $C^1$-close to $h$ and such that $q$ is  a non-hyperbolic periodic orbit.

Now, we use the \emph{weak pasting lemma} (\cite[Lemma 3.9]{AM})  in order to obtain a $C^1$-stably shadowable symplectomorphism $f_1$  such that, in local canonical coordinates, $f_1$ is linear and equal to $Dg$ in a neighborhood of the periodic non-hyperbolic orbit, $q$. Moreover, the existence of an eigenvalue, $\lambda$, with modulus equal to one is associated to a symplectic invariant two-dimensional subspace contained in the subspace $E^c_q\subseteq T_q M$ associated to norm-one eigenvalues. Furthermore, up to a perturbation using again ~\cite[Lemma 5.1]{HT}, $\lambda$ can be taken rational. This fact assures the existence of periodic orbits arbitrarily close to the $f_1$-orbit of $q$. Thus, there exists $m\in\mathbb{N}$ such that $f_1^{m\pi}(q)|_{E^c_q}=(Dg^{m\pi})_q|_{E^c_q}=id$ holds, say in a $\xi$-neighborhood of $q$. Recall that, since $f_1$ has the shadowing property  $f_1^{m\pi}$ also has. Therefore, fixing $\epsilon\in(0,\frac{\xi}{4})$, there exists $\delta\in(0,\epsilon)$ such that every $\delta$-pseudo $f_1^{m\pi}$-orbit $(x_n)_n$ is $\epsilon$-traced by some point in $M$. Take $y$ such that $d(y,q)=\frac{3\xi}{4}$ and a closed $\delta$-pseudo $f_1^{m\pi}$-orbit $(x_n)_n$ such that any ball centered in $x_i$ and with radius $\epsilon$ is still contained in the $\xi$-neighborhood of $q$, moreover, take $x_0=q$ and $x_s=y$.

By the shadowing property there exists $z\in M$ such that $d(f_1^{m\pi i}(z),x_i)<\epsilon$ for any $i\in\mathbb{Z}$. Moreover, we observe that $d(f_1^{m\pi i}(z),q)<\xi$ for every $i\in\mathbb{Z}$. Therefore, $z\in E^c_q$. Finally, we reach a contradiction by noting that
$$d(q,z)\geq d(q,x_s)-d(x_s,z)=d(q,y)-d(x_s,f_1^{m\pi s}(z))\geq \frac{3\xi}{4}-\epsilon> \frac{\xi}{2}>\epsilon.$$

\end{proof}

\section{Volume-preserving diffeomorphisms}

Theorem~\ref{teo1} also holds on the broader context of volume-preserving diffeomorphisms. 

\begin{maintheorem}\label{teo2}
If a volume-preserving diffeomorphism $f$ defined in a manifold $M$ is $C^1$-stably shadowable, then $f$ is Anosov.  
\end{maintheorem}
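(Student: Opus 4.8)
The plan is to mirror the structure of the proof of Theorem~\ref{teo1} verbatim, replacing every symplectic ingredient by its volume-preserving analogue. Concretely, I would first reduce Theorem~\ref{teo2} to two propositions exactly as before: a Newhouse-type result stating that $f\in\mathcal{F}^1_\mu(M)$ implies $f$ is Anosov, and a Moriyasu-type result stating that a $C^1$-stably shadowable volume-preserving diffeomorphism lies in $\mathcal{F}^1_\mu(M)$. The first of these is the volume-preserving counterpart of Proposition~\ref{Newhouse}; I would invoke the known dichotomy for $\mathcal{F}^1_\mu(M)$ (the relevant Newhouse/Ma\~n\'e-type statement in the conservative setting) as a black box, so that no new work is required there.

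For the second proposition, the argument is by contradiction and follows the proof of Proposition~\ref{Moriyasu} step by step. Suppose $f$ is $C^1$-stably shadowable but has a non-hyperbolic periodic orbit $p$ of period $\pi$. First I would gain smoothness: if $f$ is not already $C^\infty$, approximate it $C^1$ by a $C^\infty$ volume-preserving diffeomorphism that is still $C^1$-stably shadowable and retains a nearby periodic orbit $q$ of period $\pi$ (using the conservative smoothing results), passing to $q$ non-hyperbolic after a further small perturbation that degrades ``poor'' hyperbolicity back to the unit circle. Then I would apply the weak pasting lemma in the volume-preserving category to make the diffeomorphism linear, equal to $Dg$, in local coordinates near $q$. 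The center eigenvalue of modulus one can be rendered rational by a further perturbation, producing an integer $m$ with $f_1^{m\pi}|_{E^c_q}=\mathrm{id}$ on a $\xi$-neighborhood of $q$. The shadowing argument that closes the proof is then identical: fix $\epsilon\in(0,\xi/4)$, obtain $\delta$ from shadowing of $f_1^{m\pi}$, take $y$ with $d(y,q)=3\xi/4$, build a closed $\delta$-pseudo-orbit through $x_0=q$ and $x_s=y$, and note that any shadowing point $z$ must lie in $E^c_q$ (hence stay $\xi$-close to $q$ under all iterates), contradicting $d(q,z)\ge \frac{3\xi}{4}-\epsilon>\epsilon$.

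The main obstacle is ensuring that each perturbation tool used in the symplectic proof has a genuine volume-preserving analogue with the same conclusions: specifically, the conservative version of Zehnder-type smoothing (\cite{Z} analogue), the weak pasting lemma of \cite[Lemma 3.9]{AM} in the $\mu$-preserving setting, and the eigenvalue-adjustment lemma \cite[Lemma 5.1]{HT} allowing one to move a nearly-unit eigenvalue onto the unit circle and then make it rational while remaining volume-preserving and $C^\infty$. The low-dimensional cases ($\dim M = 2$, where area-preserving coincides with symplectic, and $\dim M = 3$) may require a word of care, since the center subbundle associated to a unit-modulus eigenvalue need not be two-dimensional; however, the shadowing part of the argument only uses that $E^c_q$ is a proper invariant subspace on which an iterate acts as the identity, so a single real eigenvalue $\pm 1$ suffices and the scheme goes through.

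\begin{proof}[Proof of Theorem~\ref{teo2}]
The proof is entirely analogous to that of Theorem~\ref{teo1}. One replaces $\text{Symp}^1_\omega(M)$ by $\text{Symp}^1_\mu(M)$ and $\mathcal{F}^1_\omega$ by $\mathcal{F}^1_\mu$ throughout. Proposition~\ref{Newhouse} is replaced by its volume-preserving counterpart, and in the proof of Proposition~\ref{Moriyasu} the perturbation lemmas (smoothing, weak pasting, and the eigenvalue adjustment of \cite[Lemma 5.1]{HT}) are used in the volume-preserving category, while the final shadowing argument is identical, using only that $f_1^{m\pi}$ restricted to the center subspace $E^c_q$ is the identity on a $\xi$-neighborhood of $q$.
\end{proof}
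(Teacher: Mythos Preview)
Your proposal is correct and follows essentially the same approach as the paper: reduce to the two propositions and substitute each symplectic tool by its volume-preserving analogue. The paper's proof is slightly more specific about the exact replacements---it cites \cite[Theorem 1.1]{AC} for the volume-preserving Newhouse-type result, Moser's coordinates \cite{Mo} in place of Darboux, Avila's smoothing \cite{A} in place of \cite{Z}, \cite[Proposition 7.4]{BDP} in place of \cite[Lemma 5.1]{HT}, and \cite[Theorem 3.6]{AM} in place of \cite[Lemma 3.9]{AM}---but the argument is otherwise identical to yours.
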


\begin{proof}
The proof follows the same steps as the one of Theorem~\ref{teo1}. Next we give the strategy for proving it by referring the fundamental pieces that replace the ones used along the proof of Theorem~\ref{teo1}. 
\begin{itemize}
 \item The version of Proposition~\ref{Newhouse} for volume-preserving diffeomorphisms is proved in ~\cite[Theorem 1.1]{AC};
 \item Proposition~\ref{Moriyasu} can be obtained by noting that: 
\begin{enumerate}
 \item the Darboux coordinates are switched by the Moser volume-preserving coordinates (cf. ~\cite{Mo});
 \item the result in~\cite{Z} are replaced by the smoothness result in ~\cite{A};
\item the perturbation lemma in ~\cite{HT}, are interchanged by its correspondent in the volume-preserving case proved in \cite[Proposition 7.4]{BDP};
\item and, finally, we should use ~\cite[Theorem 3.6]{AM} instead of ~\cite[Theorem 3.9]{AM}.
\end{enumerate}

\end{itemize}
We leave the details to the reader.
 
\end{proof}

\section*{Acknowledgements}

The author was partially supported by the FCT-Funda\c{c}\~ao para a Ci\^encia e a Tecnologia, project PTDC/MAT/099493/2008.

\end{document}